\title[ Approximation of fixed points for a representation]{ Approximation of fixed points for a representation of nonexpansive mappings in Banach spaces}
 \author{   Ebrahim  Soori  }
 \thanks{ \!\!\!\!\!\!\!\! \!\!2010 Mathematics Subject Classification: 47H09; 47H10. \\ E-mail address: sori.e@lu.ac.ir; sori.ebrahim@yahoo.com.}
\theoremstyle{plain}
\newtheorem{lem}{\textbf{Lemma}}[section]
\newtheorem{thm}[lem]{\textbf{Theorem}}
\newtheorem{de}[lem]{\textbf{Definition}}
\theoremstyle{definition}
\theoremstyle{definition}
\theoremstyle{remark}
\renewcommand{\sc}{\mathcal{S}}
\newcommand{\ud}{\,\mathrm{d}}
\begin{document}
\begin{large}

\maketitle
%\date{January 1, 2011}
%----------additions
%\dedicatory{To my boss}
%%% ----------------------------------------------------------------------
 \begin{center}
 \begin{normalsize}
    Department  of Mathematics, Lorestan University, Khoramabad, Lorestan, Iran.
 \end{normalsize}
 \end{center}
\begin{abstract}

\begin{normalsize}
The purpose of this paper is to study an implicit scheme    for  a representation of nonexpansive mappings
on a closed  convex subset of a smooth and uniformly convex Banach space with respect to a     left regular sequence of means
  defined on an appropriate space of bounded real valued functions of the semigroup.
 This algorithm   extends the algorithm that introduced  in [N. Hussain, M. L. Bami and E. Soori, An implicit method for finding a common
fixed point of a representation of
nonexpansive mappings in Banach spaces, Fixed Point Theory and Applications 2014, 2014:238].
\end{normalsize}
\end{abstract}
\begin{normalsize}
   \textbf{keywords}: Smooth Banach space;  Weakly sequentially continuous; Sunny nonexpansive retraction; Representation of nonexpansive mappings, Opial's condition.
   \end{normalsize}
%%% ----------------------------------------------------------------------
%%% ----------------------------------------------------------------------
%\tableofcontents

\section{ Introduction}
Let $C$ be a nonempty closed and convex subset of a Banach space $E$ and
$E^{*}$ be the dual space of $E$. Let $\langle.,.\rangle$   denote the pairing between $E$ and $E^{*}$. The
normalized duality mapping $J: E \rightarrow E^{*}$
is defined by
\begin{align*}
    J(x)=\{f \in E^{*}: \langle x, f \rangle= \|x\|^{2}=\|f\|^{2} \}
\end{align*}
for all $x \in E.$ In the sequel, we use $j$ to denote the single-valued normalized
duality mapping. Let $U = \{x \in E : \|x\| = 1\}$. $E$ is said to be smooth or said
to   have a G$\hat{a}$teaux differentiable norm if the limit
\begin{align*}
    \lim_{t\rightarrow 0}\frac{\|x+ty\|-\|x\|}{t}
\end{align*}
exists for each $ x, y \in U$. $E$ is said to have a uniformly G$\hat{a}$teaux differentiable
norm if for each $y \in U$, the limit is attained uniformly for all $ x \in U$. $E$ is said
to be uniformly smooth or said to   have a uniformly F$r\acute{e}$chet differentiable
norm if the limit is attained uniformly for $x, y \in U$. It is known that if the
norm of $E$ is uniformly G$\hat{a}$teaux differentiable, then the duality mapping $J$
is single valued and uniformly norm to weak$^{*}$ continuous on each bounded
subset of $E$. A Banach space $E$ is   smooth if the duality mapping $J$ of
$E$ is single valued. We know that if $E$ is smooth, then $J$ is norm to weak-star continuous; for more details, see  \cite{tn}. We denote the strong convergence and
the weak convergence of a sequence $\{x_{n}\}$ to $x$ in $E$ by $x_{n} \rightarrow x$ and $x_{n} \rightharpoonup x$, respectively. We
also denote the weak$^{*}$ convergence of a sequence $\{x_{n}^{*}\}$  to $x^{*}$ in $E^{*}$ by  $x_{n}^{*}  \overset*  \rightharpoonup   x^{*} $. The duality
mapping $J$ is said to be weakly sequentially continuous if $x_{n} \rightharpoonup x$ implies that $Jx_{n} \overset* \rightharpoonup Jx$; see
 \cite{Ag} for details.

Let $C$ be a nonempty closed and convex subset of a Banach space $E$.  A mapping $ T$ of $ C $ into itself is called nonexpansive if $\|Tx - Ty\| \leq \|x - y\|,$ for all $x, y \in C$ and a mapping $f$ is an $\alpha$-contraction on $E $ if  $ \|f (x) -f (y)\| \leq \alpha \|x - y\|, \;x, y \in E$  such that  $0 \leq\alpha < 1$.

In this paper   we extend  theorem 3.1 in \cite{soo} by removing  the compactness condition of  $ C $      that is a very strong condition on it, where $ C $ is   a  closed  convex subset of a smooth and uniformly convex  Banach space. For this purpose, first we extend Lemma 3.5 in \cite{lmt} and  Lemma 1 in \cite{ss}, by removing the compactness condition on $C$   and we change   the   proofs of this  theorem and these Lemmas.

\section{preliminaries}
 Let $S$ be a semigroup. We denote by $B(S)$ the Banach space of all bounded real-valued functions defined on $ S $ with supremum
norm. For each $ s \in S$ and $f\in B(S)$ we define $l_{s}$ and  $r_{s}$ in $B(S)$  by\\ \;\indent$\quad (l_{s}f )(t) = f (st)$ ,\qquad $(r_{s}f )(t) = f (ts)$,\quad $ \;\;( t \in S)$.\\
Let $ X$ be a subspace of $B(S)$ containing 1 and let $ X^{*}$ be its topological dual. An element $\mu $ of $X^{*} $ is said to be a mean on $X$ if $\|\mu\|=\mu(1)=1$. We often write $\mu_{t}(f (t))$ instead of $ \mu(f )$ for $ \mu\in X^{*}$ and $f \in X$. Let $ X $ be left invariant (resp. right
invariant), i.e. $ l_{s}(X) \subset X$ (resp. $ r_{s}(X) \subset X$) for each $ s \in S$. A mean $\mu$ on $X$ is said to be left invariant (resp. right invariant) if $\mu(l_{s}f ) =\mu(f )$ (resp. $\mu(r_{s}f )= \mu(f )$) for each $s \in S $ and $f \in X$. $X $ is said to be left (resp. right) amenable if $X$ has a left (resp. right) invariant mean. $X$ is amenable if $X$ is both left and right amenable. As is well known, $B(S)$ is amenable when $S$ is a
commutative semigroup (see  page 29 of \cite{tn}). A net $\{\mu_{\alpha}\}$ of means on $X$ is said to be left regular if  $$\displaystyle\lim_{\alpha} \|l^{*}_{s}\mu_{\alpha}-\mu_{\alpha}\|= 0,$$  for
each $s \in S$, where $l_{s}^{*}$ is the adjoint operator of $l_{s}$.

Let $f$ be a  function of semigroup $S$ into a reflexive Banach space $E$ such that the weak closure of $\{f(t):t\in S\}$ is weakly
compact and let $X$ be a subspace of $B(S)$ containing all the functions $t \rightarrow \left<f(t),x^{\ast}\right>$ with  $ x^{\ast} \in E^{\ast}  $. We know from \cite{hi} that for any $ \mu\in X^{\ast}$, there
exists a unique element $ f_{\mu}$ in $E$ such that $\left<f_{\mu},x^{\ast}\right>=\mu_{t}\left<f(t),x^{\ast}\right>$
for all $ x^{\ast}\in E^{\ast} $. we denote such $f_{\mu}$ by $\int\!f(t)\ud \mu(t)$. Moreover, if $ \mu$ is a mean on $X$ then from \cite{ki}, $\int\! f(t)\ud \mu(t) \in \overline{\rm{co}}\,\{f(t):t\in S\}$.

Let $C$ be a nonempty closed and convex subset of $E$. Then, a family   $ \sc=\{T_{s}:s\in S\} $ of mappings from $C$ into itself is said to be a  representation of $S$ as nonexpansive mapping on $C$ into itself if $ \sc$ satisfies the following :\\
(1) $T_{st}x=T_{s}T_{t}x$ for all $s, t \in S$ and $x\in C$;\\
(2) for every $s \in S$ the  mapping $T_{s}: C \rightarrow C$ is nonexpansive.\\
 We denote by Fix($ \sc$)
the set of common fixed points of $\sc$, that is \\ Fix($ \sc$)=$\bigcap_{ s\in S}\{x\in C: T_{s}x=x  \}$.

%\begin{lem}{\rm(\cite{ma})}.\label{rho} Let $A $ be a strongly positive linear bounded operator on a Hilbert space $ H$ with coefficient $\overline{}$ and $0 < \rho\leq\|A\|^{-1}$ Then $\|I-\rho A\| \leq 1-\rho\,\overline{}$.\/\end{lem}

\begin{thm}{\rm(\cite{said})}.\label{tu}
Let $ S $ be a semigroup, let $ C $ be a closed, convex subset of a
reflexive Banach space $  E$,  $ \sc=\{T_{s}:s\in S\} $ be a representation of $S$ as nonexpansive mapping from $C$ into
itself such that weak closure of  $\lbrace T_{t}x : t \in S \rbrace $ is weakly compact for each $ x \in C $ and $X$ be a subspace of $B(S)$ such that $ 1 \in X $ and the mapping $t \rightarrow \left<T (t)x, x^{*}\right>$ be an element of $ X $ for
each $x \in C$ and $ x^{*} \in E$, and $\mu$ be a mean on $X$.
If we write $T_{\mu}x $ instead of
$\int\!T_{t}x\, d\mu(t),\/$
 then the following hold.\/\\{\rm({i})}\,  $T_{\mu}$ is a nonexpansive mapping from $C$ into $C$.\/\\{\rm({ii})} $T_{\mu}x=x $  for each  $x \in Fix(\sc) $.\/\\{\rm({iii})} $T_{\mu}x \in \overline{co}\,\{T_{t}x:t\in S\}$ for each $x\in C$.\/ \\{\rm({iv})} If $  X$ is $ r_{s} $-invariant  for
  each $ s \in S $ and  $ \mu $ is right invariant, then $T_{\mu}T_{t} =T_{\mu} $ for each $ t \in S $.
\end{thm}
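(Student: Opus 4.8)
The plan is to reduce every one of the four assertions to the defining duality identity
$\langle T_\mu x, x^{*}\rangle = \mu_t\langle T_t x, x^{*}\rangle$, valid for all $x^{*}\in E^{*}$, which characterizes the vector integral $T_\mu x=\int T_t x\,\ud\mu(t)$. Throughout I would lean on two elementary features of a mean: it has norm one, so $\abs{\mu(g)}\le\norm{g}_\infty$ for every $g\in X$, and it fixes constants, $\mu(c\cdot 1)=c$. I would dispatch (iii) first, because it simultaneously secures well-definedness. Since $\mu$ is a mean, the result cited from \cite{ki} gives immediately $T_\mu x=\int T_t x\,\ud\mu(t)\in\overline{co}\,\{T_t x:t\in S\}$; as $C$ is closed and convex and contains every $T_t x$, this closed convex hull lies in $C$, so $T_\mu$ indeed maps $C$ into $C$.

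For (i), fix $x,y\in C$ and $x^{*}\in E^{*}$ with $\norm{x^{*}}\le 1$. The function $g(t)=\langle T_t x-T_t y, x^{*}\rangle$ belongs to $X$ and satisfies $\abs{g(t)}\le\norm{T_t x-T_t y}\le\norm{x-y}$ by nonexpansiveness of each $T_t$. Applying $\mu$ and the duality identity yields $\langle T_\mu x-T_\mu y, x^{*}\rangle=\mu(g)$, hence $\abs{\langle T_\mu x-T_\mu y, x^{*}\rangle}\le\norm{x-y}$; taking the supremum over such $x^{*}$ gives $\norm{T_\mu x-T_\mu y}\le\norm{x-y}$. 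Part (ii) is then immediate: for $x\in Fix(\sc)$ one has $T_t x=x$ for all $t$, so $t\mapsto\langle T_t x, x^{*}\rangle$ is the constant $\langle x, x^{*}\rangle$, and $\langle T_\mu x, x^{*}\rangle=\mu_t\langle x, x^{*}\rangle=\langle x, x^{*}\rangle$ for every $x^{*}$, forcing $T_\mu x=x$.

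The real content is (iv), where I would exploit the semigroup law $T_{st}=T_s T_t$ together with right invariance. Fix $t\in S$, $x\in C$, $x^{*}\in E^{*}$, and set $h(s)=\langle T_s x, x^{*}\rangle\in X$. Then
\[
\langle T_\mu(T_t x), x^{*}\rangle=\mu_s\langle T_s T_t x, x^{*}\rangle=\mu_s\langle T_{st} x, x^{*}\rangle=\mu_s\bigl((r_t h)(s)\bigr),
\]
since $(r_t h)(s)=h(st)=\langle T_{st}x, x^{*}\rangle$, and $r_t h\in X$ by $r_t$-invariance. Right invariance of $\mu$ now gives $\mu(r_t h)=\mu(h)=\langle T_\mu x, x^{*}\rangle$. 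As $x^{*}$ is arbitrary, $T_\mu T_t x=T_\mu x$.

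The only genuine obstacle I anticipate is the bookkeeping in (iv): one must check that every function being integrated truly lies in $X$ before $\mu$ may be applied. Concretely, $s\mapsto\langle T_s(T_t x), x^{*}\rangle$ must be read \emph{both} as the admissible function attached to the point $T_t x\in C$ (which is what makes $T_\mu(T_t x)$ meaningful) \emph{and}, via the semigroup identity, as $r_t$ applied to the admissible function $h$ attached to $x$ (which is what activates right invariance). Once this identification is secured, each of the four statements collapses to a single functional computation.
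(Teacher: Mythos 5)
Your proof is correct: each assertion reduces cleanly to the defining identity $\langle T_{\mu}x, x^{*}\rangle=\mu_{t}\langle T_{t}x, x^{*}\rangle$ together with $\|\mu\|=\mu(1)=1$, the range statement (iii) plus closed convexity of $C$ gives $T_{\mu}(C)\subseteq C$, and in (iv) you correctly identify $s\mapsto\langle T_{st}x,x^{*}\rangle$ with $r_{t}h$ (checking it lies in $X$) so that right invariance of $\mu$ applies. Note that the paper offers no proof of this theorem --- it is quoted as a known result from \cite{said} --- so there is nothing internal to compare against; your argument is the standard one.
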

\textbf{Remark}: From, Theorem 4.1.6 in \cite{tn}, every uniformly convex Banach space is strictly convex and reflexive.

Let $D$ be a subset of $B$ where $B$ is a subset of a Banach space $E$ and let $P$ be a retraction of $B$ onto $D$, that is,
$Px = x$ for each $x \in D$. Then $P$ is said to be sunny, if for each $x \in B$ and $t\geq0$ with $Px + t (x - Px) \in B$,
$P(Px + t (x - Px)) = Px$.
A subset $D$ of $B$ is said to be a sunny nonexpansive retract of $B$ if there exists a sunny nonexpansive retraction $P$ of
$B$ onto $D$.
We know that if $E$ is smooth and $P$ is a retraction of $B$ onto $D$, then $P$ is sunny and nonexpansive if and
only if for each $x \in B$ and $z \in D$,
$\langle  x-Px\,,\,J(z-Px)\rangle    \leq 0$.\\
For more details, see  \cite{tn}.

\begin{de} \cite{Ag} A vector space $X$ is said to satisfy Opial's condition,  if for each sequence $\{x_{n}\}$
 in $X$ which converges weakly to
point $x \in X$,
\begin{equation*}
    \liminf_{n\rightarrow\infty}\|x_{n}-x\|<\liminf_{n\rightarrow\infty}\|x_{n}-y\|\quad(y\in X,\;y\neq x).\qquad\qquad\qquad\qquad\qquad
\end{equation*}
\end{de}
\begin{de} \cite{Ag}
  Let $C$ be a nonempty subset of a Banach space $X$ and $T :C \rightarrow X$ a mapping. Then $T$ is said to be demiclosed at $v \in X$ if for any sequence
$\{x_{n}\}$ in $C$ the following implication holds:\\
$x_{n} \rightharpoonup u \in C $ and $Tx_{n} \rightarrow v$ imply $Tu = v$.
\end{de}
 Throughout the rest of this paper, the open ball of radius $r $ centered at 0 is denoted by $ B_{r}$. Let   $C$ be a nonempty closed  convex subset of a     Banach space  $E$. For $\epsilon > 0 $ and a mapping $ T : C \rightarrow C$, we let $ F_{\epsilon}(T; D)$ be the set of $\epsilon$-approximate fixed points of $T$ for a subset $D$ of $C$, i.e. $ F_{\epsilon}(T; D) = \{x \in D : \|x - Tx\| \leq\epsilon\}$.

\section{Main result}

In this section, we deal with   a strong convergence approximation scheme for finding a common element of  the set of common fixed points of a representation of nonexpansive mappings that expresses  the introduced  algorithm  in theorem 3.1 in \cite{soo} by removing the very strong condition    compactness    on $C$ for   uniformly  convex Banach spaces.

First,    we express Lemma 3.5 in \cite{lmt} by removing the compactness condition on $C$ for    smooth and reflexive    Banach spaces  as follows.

\begin{thm}\label{lmt}
Let $S$ be a  semigroup. Suppose that $E$ is a real  smooth and   reflexive Banach space.  Let $C$ be a closed convex subset of $E$.  Let  $X$ be a left invariant subspace of $B(S)$ such that $1\in X$, and the function $t\mapsto \langle T_{t}x,x^{*}\rangle$ is an element of $X$ for each $x\in C$ and $x^{*}\in E^{*}$.  Suppose that $X$ is left amenable. Let   $ \sc=\{T_{s}:s\in S\}$ be a representation of $S$ as nonexpansive mappings from $C$ into itself such that $  \rm{Fix}(\mathcal{S})\neq \emptyset$. If $J$ is weakly sequentially continuous,  then $\rm{ Fix}(\sc) $ is a
sunny nonexpansive retract of $C$ and the sunny nonexpansive retraction of $C$ onto $\rm{ Fix}(\sc) $ is unique.
\end{thm}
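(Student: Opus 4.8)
To prove that $\mathrm{Fix}(\sc)$ is a sunny nonexpansive retract of $C$, I first need a candidate retraction. The natural source is an implicit/resolvent-type construction: for each $t\in(0,1)$ and a fixed anchor point $u\in C$ (or using a left-regular net of means to average the family $\sc$), consider the contraction $x\mapsto t u + (1-t)T_{\mu}x$, where $T_\mu$ is the nonexpansive mapping furnished by Theorem~\ref{tu}. By Banach's fixed point theorem this has a unique fixed point $x_t$, and the plan is to show $x_t$ converges (strongly) as $t\to 0$ to a point of $\mathrm{Fix}(\sc)$, thereby defining the map $P u=\lim_{t\to 0}x_t$. The bulk of the argument consists in verifying that this limit exists and that $P$ is the sunny nonexpansive retraction.

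\textbf{Key steps in order.} First I would record that $T_\mu$ is nonexpansive with $\mathrm{Fix}(T_\mu)\supseteq\mathrm{Fix}(\sc)$ by Theorem~\ref{tu}(i)--(ii), and in fact, using left amenability and the left-regularity of the net of means together with (iv), that $\mathrm{Fix}(T_\mu)=\mathrm{Fix}(\sc)$. Second, since $\mathrm{Fix}(\sc)\neq\emptyset$, the net $\{x_t\}$ is bounded; a standard estimate gives $\|x_t-T_\mu x_t\|=t\,\|u-T_\mu x_t\|\to 0$, so $\{x_t\}$ is an approximate fixed-point net for $T_\mu$. Third, I would invoke reflexivity to extract a weakly convergent subnet $x_{t_n}\rightharpoonup p$, and then use the demiclosedness of $I-T_\mu$ at $0$ to conclude $p\in\mathrm{Fix}(T_\mu)=\mathrm{Fix}(\sc)$. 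Here Opial's condition (or the weak sequential continuity of $J$) is exactly the tool that delivers demiclosedness. Fourth, I would establish the sunny-nonexpansive characterization recalled in the excerpt, namely
\begin{equation*}
\langle u-p,\,J(z-p)\rangle\le 0\qquad(z\in\mathrm{Fix}(\sc)),
\end{equation*}
by passing to the limit in the inequality $\langle x_t-u,\,J(x_t-z)\rangle\ge$ (a controlled error term) coming from the resolvent identity and nonexpansiveness of $T_\mu$; weak sequential continuity of $J$ is what lets me take limits inside the duality pairing. Finally, smoothness forces $J$ to be single-valued, and this variational inequality has at most one solution $p$, which both pins down the whole net's limit (not merely a subnet) and yields uniqueness of the retraction.

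\textbf{Verifying the retraction properties.} Defining $Pu:=p$, I would check that $P$ is nonexpansive directly from the variational inequality: writing the inequality for two anchors $u_1,u_2$ with images $p_1,p_2$, adding them and using monotonicity of $J$ gives $\|p_1-p_2\|^2\le\langle u_1-u_2,\,J(p_1-p_2)\rangle\le\|u_1-u_2\|\,\|p_1-p_2\|$. The sunny property is immediate from the characterization, and $Pz=z$ for $z\in\mathrm{Fix}(\sc)$ is built into the construction. Uniqueness of the sunny nonexpansive retraction follows because any such retraction must satisfy the same characterizing inequality, whose solution is unique by smoothness.

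\textbf{Main obstacle.} The delicate point is the strong convergence of the full net $\{x_t\}$, not just of a subnet. Weak subsequential limits land in $\mathrm{Fix}(\sc)$ fairly cheaply, but upgrading to strong convergence and to convergence of the entire net requires the variational inequality together with the uniqueness forced by smoothness and the single-valuedness of $J$; this is precisely where the hypotheses (smooth, reflexive, $J$ weakly sequentially continuous, Opial) must be combined carefully, and where the removal of compactness makes the argument genuinely harder than in the compact setting of \cite{soo}.
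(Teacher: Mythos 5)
Your proposal follows essentially the same route as the paper's proof: the anchored implicit net $x_t=tu+(1-t)T_{\mu}x_t$ (the paper uses $z_n=\frac{1}{n}x+(1-\frac{1}{n})T_{\mu}z_n$), boundedness via the estimate $\|x_t-p\|^2\le\langle u-p,J(x_t-p)\rangle$, the approximate fixed point property $\|x_t-T_{\mu}x_t\|\to 0$, demiclosedness to place weak cluster points in $\mathrm{Fix}(\sc)$, and the variational inequality $\langle u-Pu,J(z-Pu)\rangle\le 0$ obtained by passing to the limit with the weak sequential continuity of $J$, which simultaneously yields strong convergence, the sunny nonexpansive characterization, and uniqueness. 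The one caveat is your justification of $\mathrm{Fix}(T_{\mu})=\mathrm{Fix}(\sc)$ via ``left-regularity of the net of means'' and Theorem~\ref{tu}(iv): in this theorem there is a single left invariant mean rather than a net, and (iv) requires right invariance, so that identification is not available here (the paper only proves it later, under uniform convexity) and the cluster-point step should instead be run, as the paper does, through demiclosedness of $I-T_t$ for each $t\in S$.
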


\begin{proof}
%We divide the proof into five steps.

%Step 1. The existence of $z_{n}$ which satisfies
From Theorem 3.2.8  in \cite{Ag}, $E$ has the Opial condition. As in the proof of Lemma 3.5 in \cite{lmt},  by the Banach contraction principle, we put
a sequence  $z_{n}$ in $C$ as follows:
\begin{align}\label{zn1l}
      z_{n}=\frac{1}{n}x+(1- \frac{1}{n})T_{\mu}z_{n}\quad ( n \in \mathbb{N}),
\end{align}
where,  $x \in C$ is fixed and   $\mu$ is a left invariant mean on $X$.
 As in the proof of Lemma 3.5 in \cite{lmt}, we have
 \begin{align}\label{gab}
  \lim_{n\rightarrow \infty}\|z_{n}-T_{\mu}z_{n}\|=0.
 \end{align}

Now, we show that $\{z_{n}\}$ is bounded:\\  let $p\in  \rm{Fix}(\mathcal{S}) $. Since from (ii) of theorem \ref{tu},  $T_{\mu}p = p$, we have
\begin{align*}
   \|z_{n}-p\|^{2}=&\langle\frac{1}{n}x+(1- \frac{1}{n})T_{\mu}z_{n}  -p\:,\:J(z_{n}-p)\rangle\\
                 =& \frac{1}{n} \langle x-p,J(z_{n}-p)\rangle
                  +\langle(1-\frac{1}{n})
                 \Big(T_{\mu}z_{n}-T_{\mu}p\Big)\:,\:J(z_{n}-p)\rangle\\
                 \leq&(1-\frac{1}{n})\|z_{n}-p\|^{2} +\frac{1}{n} \langle x-p,J(z_{n}-p)\rangle.
\end{align*}
Thus,
\begin{equation}\label{6l}
\|z_{n}-p\|^{2}\leq \left< x-p\:,\:J(z_{n}-p)\right>.
\end{equation}
Hence,
\begin{equation*}
   \|z_{n}-p\| \leq\| x-p\|.
\end{equation*}
That is, the sequence $ \{z_{n}\}$ is bounded.

Next, We shall show that the sequence $\{z_{n}\}$ weakly converges   to an element of $\rm{Fix}(\mathcal{S}) $. In the other words,  we   show that the weak  limit set of $\{z_{n}\}$ which is denoted by $\omega_{\omega}\{z_{n}\}$ is a subset of $\rm{Fix}(\mathcal{S})$.
  Let $x^{*} \in \omega_{\omega}\{z_{n}\} $ and let $\{z_{n_{j}}\}$ be a subsequence of $ \{z_{n}\} $ such that $z_{n_{j}} \rightharpoonup x^{*}$. We need to show that $ x^{*} \in \rm{Fix}(\mathcal{S}) $. Since from  Corollary 5.2.10 in \cite{Ag},  $I -T_{t}$ is demiclosed at zero, for each $t \in S$ and hence  from   \eqref{gab}, we conclude that $x^{*} \in \rm{Fix}(\mathcal{S})$. Therefore,  $\omega_{\omega}\{z_{n}\} \subseteq \rm{Fix}(\mathcal{S})$.
Note that,        since   $ \{z_{n}\} $ is bounded  and    $E$ is reflexive,   from theorem 1.9.21 in \cite{Ag}, $ \{z_{n}\} $  is a weakly compact subset of $E$, hence   by  Proposition 1.7.2 in \cite{Ag},   we can select a subsequence $ \{z_{n_{j}}\}$
 of $ \{z_{n}\}$  such that
 $\{z_{n_{j}}\}$ weakly converges  to a point $z$. Therefore, from the above discussion,  $z \in \rm{Fix}(\mathcal{S})$.
Let $\{z_{n_{i}}\}$ and $\{z_{n_{j}}\}$ be subsequences of $ \{z_{n}\} $ such that $\{z_{n_{i}}\}$ and $\{z_{n_{j}}\}$ converge
weakly to $y$ and $z$, respectively.  Therefore,     $y,z \in \rm{Fix}(\mathcal{S})$. Since $J$ is weakly sequentially continuous, from \eqref{6l}, we have, $\{z_{n_{i}}\}$ and $\{z_{n_{j}}\}$
  converge strongly to $y$ and $z$, respectively. Indeed, from \eqref{6l} we have
  \begin{equation*}
\lim_{i\rightarrow \infty}\|z_{n_{i}}-y\|^{2}\leq \lim_{i\rightarrow \infty} \left< x-y\:,\:J(z_{n_{i}}-y)\right>=\left< x-y\:,\:J(y-y)\right>=0,
\end{equation*}
hence, $z_{n_{i}}\rightarrow y$ and similarly  $z_{n_{j}} \rightarrow z$.
   As in the proof of Lemma 3.5 in \cite{lmt}, we have, for each
$z \in \rm{Fix}(\mathcal{S})$ and $n \in N$,
 \begin{align}\label{la2}
 \langle  z_{n}-x\,,\,J(z_{n}-z)\rangle    \leq 0.
\end{align}

 Further,
 \begin{align*}
  | \langle  z_{n_{i}}-x\,&,\,J(z_{n_{i}}-z)\rangle  -\langle  y-x\,,\,J(y-z)\rangle | \nonumber \\=&  | \langle  z_{n_{i}}-x\,,\,J(z_{n_{i}}-z)\rangle -\langle  y-x\,,\,J(z_{n_{i}}-z)\rangle \nonumber \\&+ \langle  y-x\,,\,J(z_{n_{i}}-z)\rangle - \langle y-x\,,\,J(y-z)\rangle | \nonumber \\ \leq & | \langle  z_{n_{i}}-y\,,\,J(z_{n_{i}}-z)\rangle| + |\langle  y-x\,,\,J(z_{n_{i}}-z) -J(y-z)\rangle | \nonumber \\ \leq & \parallel  z_{n_{i}}-y \parallel \parallel J(z_{n_{i}}-z)\parallel + |\langle  y-x\,,\,J(z_{n_{i}}-z) -J(y-z)\rangle | \nonumber \\ \leq & \parallel  z_{n_{i}}-y \parallel M + |\langle  y-x\,,\,J(z_{n_{i}}-z) -J(y-z)\rangle |,
 \end{align*}
 where $M$ is an upper bound for $\{J(z_{n_{i}}-z)\}_{i \in \mathbb{N}}$. Hence, we have
  \begin{equation}\label{jish}
    \lim_{i\rightarrow \infty}  \langle  z_{n_{i}}-x\,,\,J(z_{n_{i}}-z)\rangle  = \langle  y-x\,,\,J(y-z)\rangle,
  \end{equation}
  Now, since $J$ is weakly sequentially continuous, from  \eqref{jish} and  \eqref{la2}, we have
\begin{align*}
    \langle  y-x\,,\,J(y-z)\rangle=\lim_{i\rightarrow \infty}\langle  z_{n_{i}}-x\,,\,J(z_{n_{i}}-z)\rangle    \leq 0.
\end{align*}

Similarly, we have $ \langle  z-x\,,\,J(z-y)\rangle  \leq 0$  and hence $y = z$. Indeed, we have
 \begin{align*}
   \parallel y-z \parallel ^{2}&=  \langle  y-z\,,\,J(y-z)\rangle = \langle  y-x\,,\,J(y-z)\rangle +  \langle  x-z\,,\,J(y-z)\rangle \\ =& \langle  y-x\,,\,J(y-z)\rangle +  \langle  z-x\,,\,J(z-y)\rangle  \leq 0,
 \end{align*}
 and hence $y = z$.  Thus, $ \{z_{n}\} $ weakly converges   to an element of  $ \rm{Fix}(\mathcal{S})$.
  %Therefore, from the above discussion, $ \{z_{n}\} $  converge strongly  to an element of  $ \rm{Fix}(\mathcal{S})$.

Let
us define a mapping $P$ of $C$ into itself by $Px = weak-\displaystyle\lim_{n}z_{n}$ i.e,    $   z_{n} \rightharpoonup Px$. Hence, $z_{n} \rightarrow Px $. Then, from the condition that the duality mapping is weakly sequentially continuous, as the above discussion, we have, for each $ z \in  \rm{Fix}(\mathcal{S}) $,
\begin{align*}
\langle  x-Px\,,\,J(z-Px)\rangle = \lim_{n\rightarrow\infty}\langle z_{n}- x\,,\,J(z_{n}-z)\rangle   \leq 0.
\end{align*}
It follows from   Lemma 5.1.6 in \cite{tn}, that $P$ is a sunny nonexpansive retraction of $C $ onto $\rm{Fix}(\mathcal{S}) $.  As in the Proof of Lemma 3.5. in \cite{lmt}, we have $P$ is unique.

\end{proof}

Now, we prove another version of  Lemma 1 in \cite{ss}, by removing the compactness condition on $C$ for  uniformly  convex  Banach spaces  as follows.

\begin{thm}\label{sunyn}
 Let $S$ be a  semigroup. Suppose that $E$ is a real   uniformly  convex    and smooth Banach space.  Let $C$ be a closed convex subset of   $E$.  Suppose that   $ \sc=\{T_{s}:s\in S\}$ be a representation of $S$ as nonexpansive mappings from $C$ into itself  such that $  \rm{Fix}(\mathcal{S})\neq \emptyset$. Let $X$ be a left invariant subspace of $B(S)$ such that $1\in X$, and the function $t\mapsto \langle T_{t}x,x^{*}\rangle$ is an element of $X$ for each $x\in C$ and $x^{*}\in E^{*}$.  If $ \mu $  is a left invariant mean on  $ X $. If $J$ is weakly sequentially continuous, then  $\rm{Fix}(T_{\mu})=T_{\mu}(C)= \rm{Fix}(\mathcal{S})$ and
  there
exists a unique sunny nonexpansive retraction from $ C $ onto $\rm{ Fix}(\sc) $.
\end{thm}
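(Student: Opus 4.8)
The plan is to derive all three equalities from the single hard inclusion \(T_\mu(C)\subseteq \mathrm{Fix}(\mathcal{S})\), and to read off the sunny nonexpansive retraction from Theorem \ref{lmt}. First I would dispose of the routine inclusions. By part (ii) of Theorem \ref{tu} each \(p\in\mathrm{Fix}(\mathcal{S})\) satisfies \(T_\mu p=p\); hence \(\mathrm{Fix}(\mathcal{S})\subseteq\mathrm{Fix}(T_\mu)\) and, writing \(p=T_\mu p\), also \(\mathrm{Fix}(\mathcal{S})\subseteq T_\mu(C)\). Trivially \(\mathrm{Fix}(T_\mu)\subseteq T_\mu(C)\), since \(x=T_\mu x\) exhibits \(x\) as a value of \(T_\mu\). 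Consequently, as soon as \(T_\mu(C)\subseteq\mathrm{Fix}(\mathcal{S})\) is established, every \(x=T_\mu u\) lies in \(\mathrm{Fix}(\mathcal{S})\subseteq\mathrm{Fix}(T_\mu)\), and the chain \(\mathrm{Fix}(\mathcal{S})\subseteq\mathrm{Fix}(T_\mu)\subseteq T_\mu(C)\subseteq\mathrm{Fix}(\mathcal{S})\) collapses to the desired \(\mathrm{Fix}(T_\mu)=T_\mu(C)=\mathrm{Fix}(\mathcal{S})\).

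The core of the proof is thus the nonlinear ergodic statement \(T_\mu x\in\mathrm{Fix}(\mathcal{S})\) for every \(x\in C\). Fix \(x\) and set \(y=T_\mu x\). The mechanism linking \(\mu\) to the dynamics is left invariance: for every \(s\in S\) and \(x^{*}\in E^{*}\) the function \(t\mapsto\langle T_{st}x,x^{*}\rangle\) equals \(l_{s}\) applied to \(t\mapsto\langle T_t x,x^{*}\rangle\), so \(\mu_t\langle T_{st}x,x^{*}\rangle=\mu_t\langle T_t x,x^{*}\rangle\); that is, \(y=\int T_s T_t x\,\mathrm{d}\mu(t)\) as well as \(y=\int T_t x\,\mathrm{d}\mu(t)\). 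Since the existence of a left invariant mean renders \(X\) left amenable, I would approximate \(\mu\) weak\(^{*}\) by a net of finitely supported means \(\mu_\alpha=\sum_i\lambda_i^\alpha\delta_{t_i^\alpha}\) that is asymptotically left invariant, i.e. \(\|l_s^{*}\mu_\alpha-\mu_\alpha\|\to 0\) for each \(s\). The barycenters \(y_\alpha=T_{\mu_\alpha}x=\sum_i\lambda_i^\alpha T_{t_i^\alpha}x\) are then genuine finite convex combinations of the bounded orbit \(\{T_t x\}\) and satisfy \(y_\alpha\rightharpoonup y\).

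The decisive estimate is \(\|T_s y_\alpha-y_\alpha\|\to 0\) for each \(s\). I would split \(T_s y_\alpha-y_\alpha=(T_s y_\alpha-T_{l_s^{*}\mu_\alpha}x)+(T_{l_s^{*}\mu_\alpha}x-y_\alpha)\). The second term is controlled linearly by the asymptotic invariance: testing against \(x^{*}\) gives \(|\langle T_{l_s^{*}\mu_\alpha}x-T_{\mu_\alpha}x,x^{*}\rangle|\le\|l_s^{*}\mu_\alpha-\mu_\alpha\|\,\|x^{*}\|\sup_t\|T_t x\|\to 0\). The first term is the discrepancy between the image under the nonexpansive \(T_s\) of a convex combination and the convex combination of the images, namely \(\|T_s(\sum_i\lambda_i^\alpha T_{t_i^\alpha}x)-\sum_i\lambda_i^\alpha T_{st_i^\alpha}x\|\); here uniform convexity enters through Bruck's inequality, which bounds this discrepancy by \(\gamma^{-1}\) of a defect measuring the failure of \(T_s\) to be an isometry along the orbit, a defect driven to zero by a telescoping/averaging argument that exploits the asymptotic invariance of \(\{\mu_\alpha\}\). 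Granting this estimate, I would pass (for each fixed \(s\)) to a sequence \(w_n\rightharpoonup y\) with \(\|T_s w_n-w_n\|\to 0\) and invoke the demiclosedness of \(I-T_s\) at \(0\)—available because \(J\) weakly sequentially continuous forces Opial's condition, exactly as at the start of the proof of Theorem \ref{lmt}—to conclude \(T_s y=y\). As \(s\) was arbitrary, \(y=T_\mu x\in\mathrm{Fix}(\mathcal{S})\), which is the inclusion needed above.

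Finally, the sunny nonexpansive retraction costs nothing new: a uniformly convex space is reflexive (the Remark following Theorem \ref{tu}), \(E\) is smooth with \(J\) weakly sequentially continuous, and \(\mathrm{Fix}(\mathcal{S})\neq\emptyset\), so Theorem \ref{lmt} applies and yields both the existence and the uniqueness of the retraction of \(C\) onto \(\mathrm{Fix}(\mathcal{S})\). I expect the single real obstacle to be the decisive estimate \(\|T_s y_\alpha-y_\alpha\|\to 0\): transferring Bruck's finite-sum inequality to the barycenters and, above all, forcing the isometry defect to vanish along the asymptotically invariant net is exactly the place where uniform convexity is indispensable and where the previously assumed compactness of \(C\) had done the work for free.
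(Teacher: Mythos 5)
Your reduction of the three-set equality to the single inclusion $T_\mu(C)\subseteq\mathrm{Fix}(\mathcal{S})$, and your appeal to Theorem \ref{lmt} for the sunny nonexpansive retraction, both match the paper. But your route to that inclusion is different from the paper's and has a genuine gap exactly where you suspect it: the ``decisive estimate'' $\|T_s y_\alpha-y_\alpha\|\to 0$ is not established. Bruck's inequality bounds $\bigl\|T_s\bigl(\sum_i\lambda_i^\alpha T_{t_i^\alpha}x\bigr)-\sum_i\lambda_i^\alpha T_{st_i^\alpha}x\bigr\|$ by $\gamma^{-1}$ of the isometry defect $\max_{i,j}\bigl(\|T_{t_i^\alpha}x-T_{t_j^\alpha}x\|-\|T_sT_{t_i^\alpha}x-T_sT_{t_j^\alpha}x\|\bigr)$, but asymptotic norm-invariance of the weights $\|l_s^*\mu_\alpha-\mu_\alpha\|\to 0$ is a statement about the measures and gives no control whatsoever over this geometric quantity at the support points; already for $S=(\mathbb{N},+)$ with Ces\`aro means the defect involves pairs such as $(t_1,t_n)=(1,n)$ and need not vanish as $n\to\infty$. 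A secondary gap: you need a net of finitely supported means that is simultaneously norm-asymptotically left invariant \emph{and} weak$^*$ convergent to the \emph{given} $\mu$ (otherwise $y_\alpha$ converges weakly to $T_\nu x$ for some other invariant mean $\nu$); Day's theorem gives the first property but prescribing the weak$^*$ limit requires an additional argument you do not supply.

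The paper sidesteps both difficulties by never approximating $\mu$. Fixing $t\in S$ and the bounded invariant set $D=\{y\in C:\|y-p\|\le\|x-p\|\}$, it invokes Bruck's Corollary 1.1 for the single map $T_t$ to get an $N$ with $\frac{1}{N+1}\sum_{i=0}^{N}T_{t^is}y\in F_\delta(T_t;D)$ uniformly in $s\in S$ and $y\in D$, then uses the \emph{exact} left invariance of $\mu$ to write
\begin{equation*}
T_\mu x=\int\frac{1}{N+1}\sum_{i=0}^{N}T_{t^is}x\,\ud\mu(s)\in\overline{\mathrm{co}}\,F_\delta(T_t;D)\subset F_\epsilon(T_t;D),
\end{equation*}
the last inclusion being Bruck's convex approximation property (where uniform convexity actually enters). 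Letting $\epsilon\downarrow 0$ gives $T_tT_\mu x=T_\mu x$ directly, with no net, no weak limit, and no demiclosedness needed for this part. If you want to salvage your approach you would essentially have to reprove Bruck's Corollary 1.1; the cleaner fix is to replace your barycenter net by the paper's Ces\`aro-average identity.
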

\begin{proof}
From Theorem 3.2.8  in \cite{Ag}, $E$ has the Opial condition. Consider  $t \in S$  and let   $x \in C$.
Let $p$  be an arbitrary element of $\rm{Fix}(\mathcal{S})$. Set
$D=\{y \in C: \|y-p\|\leq \parallel x-p \parallel\}$. Note that, D is a bounded closed convex set and  $x \in D$ and $T_{t}(D)\subset D$. Let
$\epsilon>0$. By   Theorem 1.2 in \cite{rb} there exists $\delta>0$ such that $\overline{\text{co}}F_{\delta}(T_{t} ; D)   \subset
F_{\epsilon}(T_{t} ; D)$. By   Corollary 1.1 in \cite{rb}, there also exists a natural number $N$ such
that
\begin{align*}
    \Big\|\frac{1}{N+1}\sum_{i=0}^{N}T_{t^{i}s}y-T_{t}\Big(\frac{1}{N+1}\sum_{i=0}^{N}T_{t^{i}s}y \Big)\Big\| \leq \delta,
\end{align*}
for all $  s \in S $ and $y \in D$. So we have
\begin{align*}
    \Big\|\frac{1}{N+1}\sum_{i=0}^{N}T_{t^{i}s}x-T_{t}\Big(\frac{1}{N+1}\sum_{i=0}^{N}T_{t^{i}s}x \Big)\Big\| \leq \delta,
\end{align*}
for all $  s \in S $.
   As in the proof of  Lemma 1 in \cite{ss},  Since  $\mu$ is left invariant,     we have
\begin{align*}
T_{\mu}x=& \int\! T_{s}x\ud \mu(s)=  \frac{1}{N+1}\sum_{i=0}^{N}\int\! T_{t^{i}s}x\ud \mu(s) \\=&\int\! \frac{1}{N+1}\sum_{i=0}^{N}T_{t^{i}s}x\ud \mu(s),
\\ \in \;& \overline{\text{co}}\left\{\frac{1}{N+1}\sum_{i=0}^{N}T_{t^{i}}(T_{s}x):s\in S\right\} \\ \subset & \; \overline{\text{co}}F_{\delta}(T_{t} ; D)  \subset
F_{\epsilon}(T_{t} ; D).
\end{align*}
Since  $\epsilon > 0$ was arbitrary,   we have
$
T_{t} T_{\mu}x=T_{\mu}x$, and hence $\rm{Fix}(T_{\mu})=T_{\mu}(C)= \rm{Fix}(\mathcal{S})$. The
rest of the proof of the other assertions is the same as the proof of theorem \ref{lmt}.

\end{proof}

Next, we prove another version of  Theorem 3.1 in   \cite{soo}, by removing the compactness condition on $C$ for  uniformly  convex  Banach spaces  as follows.

\begin{thm}\label{g1}
 Let $S$ be a  semigroup. Suppose that $E$  is a real    uniformly  convex     and smooth Banach space  and    $C$ be a nonempty  closed convex subset of $E$.  Suppose that   $ \sc=\{T_{s}:s\in S\}$ is a representation of $S$ as nonexpansive mappings from $C$ into itself  such that weak closure of  $\lbrace T_{t}x : t \in S \rbrace $ is weakly compact for each $ x \in C $
 and  $  \rm{Fix}(\mathcal{S})\neq \emptyset$.
 Let  $X$ be a left invariant subspace of $B(S)$ such that $1\in X$, and the function $t\mapsto \langle T_{t}x,x^{*}\rangle$ is an element of $X$ for each $x\in C$ and $x^{*}\in E^{*}$. Let $\{\mu_{n}\}$ be a left regular sequence of  means on $X$.  Suppose that $f$ is an $\alpha$-contraction on $ C$. Let $\epsilon_{n}$ be a sequence in $(0, 1)$ such that $\displaystyle \lim_{n} \epsilon_{n}=0$.  Let the duality mapping $J$ be  weakly sequentially continuous.
 Then    there exists a unique sunny nonexpansive retraction $ P $  of $ C $ onto $  \rm{Fix}(\mathcal{S})$ and $ x \in C $ such that the following  sequence  $\{z_{n}\}$   generated by
\begin{equation}\label{4}
    z_{n}=\epsilon_{n} f(z_{n})+(1-\epsilon_{n})T_{\mu_{n}}z_{n}\quad ( n \in \mathbb{N}),
\end{equation}
  strongly converges to $Px$.
\end{thm}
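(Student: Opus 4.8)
The plan is to combine the usual Browder--viscosity argument for the implicit scheme with the left-regularity machinery already used in the proofs of Theorems \ref{lmt} and \ref{sunyn}. First I would note that for each fixed $n$ the map $y\mapsto \epsilon_n f(y)+(1-\epsilon_n)T_{\mu_n}y$ is a contraction of $C$ with constant $1-\epsilon_n(1-\alpha)<1$, since $f$ is an $\alpha$-contraction and $T_{\mu_n}$ is nonexpansive by Theorem \ref{tu}(i); hence $z_n$ exists and is unique by the Banach contraction principle. Existence and uniqueness of the sunny nonexpansive retraction $P$ of $C$ onto $\mathrm{Fix}(\mathcal{S})$ come from Theorem \ref{sunyn}, the left invariant mean required there being obtained as a weak$^{*}$ cluster point of the left regular sequence $\{\mu_n\}$. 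Fixing $p\in\mathrm{Fix}(\mathcal{S})$ and using $T_{\mu_n}p=p$ (Theorem \ref{tu}(ii)) together with the contraction and nonexpansiveness estimates, I would derive $\|z_n-p\|\le \|f(p)-p\|/(1-\alpha)$, so $\{z_n\}$ is bounded and lies in a bounded closed convex $T_t$-invariant set $D$. From $z_n-T_{\mu_n}z_n=\epsilon_n(f(z_n)-T_{\mu_n}z_n)$ and $\epsilon_n\to0$ I then get $\lim_n\|z_n-T_{\mu_n}z_n\|=0$.

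The heart of the proof, and the step I expect to be the main obstacle, is to pass from the average $T_{\mu_n}$ to the individual $T_t$. I would show that for each $t\in S$, $\lim_n\sup_{x\in D}\|T_{\mu_n}x-T_tT_{\mu_n}x\|=0$, imitating the computation in Theorem \ref{sunyn}. Given $\epsilon>0$, choose $\delta$ and $N$ from Theorem 1.2 and Corollary 1.1 in \cite{rb} so that $\overline{\mathrm{co}}\,F_\delta(T_t;D)\subset F_\epsilon(T_t;D)$ and $\frac{1}{N+1}\sum_{i=0}^{N}T_{t^{i}s}y\in F_\delta(T_t;D)$ for all $s\in S$, $y\in D$. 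Whereas left invariance gave the exact identity $\int T_{t^{i}s}x\,\ud\mu(s)=T_\mu x$ in Theorem \ref{sunyn}, here I only have approximate invariance: since $l_{t^{i}}=l_t^{i}$ and $\|l_t^{*}\|\le1$, left regularity gives $\|l_{t^{i}}^{*}\mu_n-\mu_n\|\le i\,\|l_t^{*}\mu_n-\mu_n\|\to0$, and pairing against the bounded functions $t\mapsto\langle T_tx,x^{*}\rangle$ yields $\|\int T_{t^{i}s}x\,\ud\mu_n(s)-T_{\mu_n}x\|\to0$ uniformly for $x\in D$. Hence $w_n:=\int\frac{1}{N+1}\sum_{i=0}^{N}T_{t^{i}s}x\,\ud\mu_n(s)\in\overline{\mathrm{co}}\,F_\delta(T_t;D)\subset F_\epsilon(T_t;D)$ while $\|w_n-T_{\mu_n}x\|\to0$, so a triangle inequality gives $\limsup_n\|T_{\mu_n}x-T_tT_{\mu_n}x\|\le\epsilon$, and $\epsilon>0$ is arbitrary. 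Combining this with $\|z_n-T_{\mu_n}z_n\|\to0$ and the nonexpansiveness of $T_t$ yields $\lim_n\|z_n-T_tz_n\|=0$ for every $t$; then for any subsequence $z_{n_j}\rightharpoonup w$, demiclosedness of $I-T_t$ at $0$ (Corollary 5.2.10 in \cite{Ag}) forces $w\in\mathrm{Fix}(\mathcal{S})$, so $\omega_{\omega}\{z_n\}\subseteq\mathrm{Fix}(\mathcal{S})$.

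Finally I would upgrade weak to strong convergence and identify the limit. Expanding $\|z_n-p\|^{2}=\langle z_n-p,J(z_n-p)\rangle$ via \eqref{4}, using $\langle T_{\mu_n}z_n-p,J(z_n-p)\rangle\le\|z_n-p\|^{2}$ and the $\alpha$-contraction of $f$, I obtain $(1-\alpha)\|z_n-p\|^{2}\le\langle f(p)-p,J(z_n-p)\rangle$ for all $p\in\mathrm{Fix}(\mathcal{S})$. Taking $z_{n_j}\rightharpoonup w\in\mathrm{Fix}(\mathcal{S})$ and putting $p=w$, weak sequential continuity of $J$ gives $J(z_{n_j}-w)\overset{*}{\rightharpoonup}0$, whence $\|z_{n_j}-w\|\to0$: every weakly convergent subsequence converges strongly. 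To see that all such limits coincide, I would pass to the limit in $\langle (I-f)z_n,J(z_n-p)\rangle\le0$ (which follows from $z_n-f(z_n)=-\tfrac{1-\epsilon_n}{\epsilon_n}(z_n-T_{\mu_n}z_n)$ and $\langle z_n-T_{\mu_n}z_n,J(z_n-p)\rangle\ge0$) along a strongly convergent subsequence, obtaining the variational inequality $\langle (I-f)w,J(w-p)\rangle\le0$ for all $p\in\mathrm{Fix}(\mathcal{S})$; a standard monotonicity argument using $\alpha<1$ shows this inequality has a unique solution $q$. Thus every weakly convergent subsequence of the bounded sequence $\{z_n\}$ converges strongly to $q$, and by reflexivity $z_n\to q$. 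Rewriting the inequality as $\langle f(q)-q,J(p-q)\rangle\le0$ and invoking the characterization of the sunny nonexpansive retraction gives $q=Pf(q)$, so the conclusion holds with $x:=f(q)$.
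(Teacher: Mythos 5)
Your proposal is correct and follows essentially the same route as the paper: existence of $z_n$ via the Banach contraction principle, the bound $\|z_n-p\|\le\frac{1}{1-\alpha}\|f(p)-p\|$, Bruck's Theorem 1.2 and Corollary 1.1 combined with left regularity (the paper's inequality \eqref{13} is exactly your estimate $\|l_{t^i}^{*}\mu_n-\mu_n\|\to0$ paired against $s\mapsto\langle T_sy,x^{*}\rangle$) to get $\lim_n\|z_n-T_tz_n\|=0$, demiclosedness for $\omega_\omega\{z_n\}\subseteq\mathrm{Fix}(\mathcal{S})$, and Theorem \ref{sunyn} for the retraction $P$. The only divergence is cosmetic: in the endgame the paper fixes $x$ as the unique fixed point of $fP$, proves $\limsup_n\langle x-Px,J(z_n-Px)\rangle\le0$ along a weakly convergent subsequence, and concludes from $\|z_n-Px\|^{2}\le\frac{2}{1-\alpha}\langle x-Px,J(z_n-Px)\rangle$, whereas you first upgrade weak subsequential limits to strong ones and then pin down the limit by a uniqueness argument for the variational inequality — both rest on the same inequality \eqref{6} and on weak sequential continuity of $J$.
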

\begin{proof}

From Theorem 3.2.8  in \cite{Ag}, $E$ has the Opial condition.
Note that, from  theorem 2.2.8 in \cite{Ag}, $E$  is reflexive.

We  divide the proof into six   steps.

  Step 1.  The existence of $z_{n}$ which satisfies \eqref{4}.

  Proof.  The proof is the same as     the proof of Step 1 of theorem 3.1 in  \cite{soo}.
Step 2. $\{z_{n}\}$ is bounded.\\Proof. Let $p\in  \rm{Fix}(\mathcal{S}) $. Since from (ii) of theorem \ref{tu},  $T_{\mu_{n}}p = p$ for each $n \in \mathbb{N}$, we have
\begin{align*}
   \|z_{n}-p\|^{2}=&\langle\epsilon_{n}  f(z_{n})+(1-\epsilon_{n})T_{\mu_{n}}z_{n}-p\:,\:J(z_{n}-p)\rangle\\
                 =&\epsilon_{n} \langle f(z_{n})-f(p),J(z_{n}-p)\rangle+\epsilon_{n}\langle  f(p)-p\:,\:J(z_{n}-p)\rangle \\
                 &+\langle(1-\epsilon_{n})
                 \Big(T_{\mu_{n}}z_{n}-T_{\mu_{n}}p\Big)\:,\:J(z_{n}-p)\rangle\\
                 \leq&\epsilon_{n}  \alpha\|z_{n}-p\|^{2}+(1-\epsilon_{n})\|z_{n}-p\|^{2}\\&+\epsilon_{n}\Big \langle  f(p)-p\:,\:J(z_{n}-p)\Big\rangle.
\end{align*}
Thus,
\begin{equation}\label{6}
\|z_{n}-p\|^{2}\leq\frac{1}{1-\alpha}\left< f(p)-p\:,\:J(z_{n}-p)\right>.
\end{equation}
Hence,
\begin{equation*}
   \|z_{n}-p\|\leq\frac{1}{1-\alpha}\| f(p)-p\|.
\end{equation*}
That is, the sequence $ \{z_{n}\}$ is bounded.

  Step 3.  $\lim_{n\rightarrow \infty}\|z_{n}-T_{t}z_{n}\|=0 $,  for all $t \in S.$\\
Proof.  Consider  $t \in S$.
Let $p$  be an arbitrary element of $\rm{Fix}(\mathcal{S})$. Set
$D=\{y \in C: \|y-p\|\leq \frac{1}{1-\alpha}\| f(p)-p\|\}$. We remark that D is a bounded closed convex set and  $\{z_{n}\}\subset D$ and $T_{t}(D)\subset D$. Let
$\epsilon>0$. By   Theorem 1.2 in \cite{rb} there exists $\delta>0$ such that
 \begin{equation}\label{nonem}
   \overline{\text{co}}F_{\delta}(T_{t} ; D)  +B_{\delta}  \subset
F_{\epsilon}(T_{t} ; D).
 \end{equation}
  By   Corollary 1.1 in \cite{rb},  there also exists a natural number  $N$ such
that
\begin{equation}\label{12}
    \Big\|\frac{1}{N+1}\sum_{i=0}^{N}T_{t^{i}s}y-T_{t}\Big(\frac{1}{N+1}\sum_{i=0}^{N}T_{t^{i}s}y \Big)\Big\| \leq \delta,
\end{equation}
for all $  s \in S $ and $y \in D$. Let   $M_{0}=\frac{1}{1-\alpha}\| f(p)-p\|+\|p\|$. Therefore, $\displaystyle\sup_{y \in D}\|y\|\leq M_{0}$. Since $\{\mu_{n}\} $ is strongly left regular, there exists $N_{0} \in \mathbb{N} $ such that
$\|\mu_{n}-l^{*}_{t^{i}}\mu_{n}\|\leq \frac{\delta}{M_{0}}$ for $n \geq N_{0} $ and $i = 1, 2, \cdots , N. $ Then, we have
\begin{align}
\sup_{y \in D} \Big \|&T_{\mu_{n}}y-\int\! \frac{1}{N+1}\sum_{i=0}^{N}T_{t^{i}s}y\ud \mu_{n}(s) \Big  \|  \nonumber \\
=&\sup_{y\in D}\sup_{\|x^{*}\|=1} \Big | \langle T_{\mu_{n}}y ,x^{*}\rangle-\Big \langle \int\!\frac{1}{N+1}\sum_{i=0}^{N}T_{t^{i}s}y\ud \mu_{n}(s),x^{*}  \Big \rangle \Big  | \nonumber\\
   = &\sup_{y\in D}\sup_{\|x^{*}\|=1} \Big |\frac{1}{N+1}\sum_{i=0}^{N}(\mu_{n})_{s}\langle T_{s}y,x^{*}\rangle-\frac{1}{N+1}\sum_{i=0}^{N}(\mu_{n})_{s}\langle  T_{t^{i}s}y,x^{*}\rangle \Big  | \nonumber \\
   \leq & \frac{1}{N+1}\sum_{i=0}^{N}\sup_{y\in D}\sup_{\|x^{*}\|=1} \Big|(\mu_{n})_{s}\langle T_{s}y,x^{*}\rangle-(l^{*}_{t^{i}}\mu_{n})_{s}\langle T_{s}y,x^{*}\rangle \Big|\nonumber\\
   \leq &\max_{i=1,2,\cdots,N}\|\mu_{n}-l^{*}_{t^{i}}\mu_{n}\|(M_{0})\nonumber\\
   \leq &\max_{i=1,2,\cdots,N}\|\mu_{n}-l^{*}_{t^{i}}\mu_{n}\|(M_{0})\nonumber \\ \leq&\delta\quad \rm{( n\geq N_{0})}.\label{13}
\end{align}
By Theorem \ref{tu} we have
\begin{equation}\label{14}
\int\!\frac{1}{N+1}\sum_{i=0}^{N}T_{t^{i}s}y\ud \mu_{n}(s)\in \overline{\text{co}}\left\{\frac{1}{N+1}\sum_{i=0}^{N}T_{t^{i}}(T_{s}y):s\in S\right\}.
\end{equation}
It follows from \eqref{12}-\eqref{14} that
\begin{align*}
 T_{\mu_{n}}y =& \int\!\frac{1}{N+1}\sum_{i=0}^{N}T_{t^{i}s}y\ud \mu_{n}(s)+\Big(  T_{\mu_{n}}y -\int\!\frac{1}{N+1}\sum_{i=0}^{N}T_{t^{i}s}y\ud \mu_{n}(s) \Big) \\  \in & \overline{\text{co}}\left\{\frac{1}{N+1}\sum_{i=0}^{N}T_{t^{i}s}y:s\in S\right\}+B_{\delta} \\
\subset & \overline{\text{co}}F_{\delta}(T_{t} ; D)  +B_{\delta}  \subset
F_{\epsilon}(T_{t} ; D),
\end{align*}
for all $y \in D$ and $n \geq N_{0}$. Therefore, \\
$
\displaystyle \limsup_{n\rightarrow\infty}\sup_{y\in D}\|T_{t}(T_{\mu_{n}}y)-T_{\mu_{n}}y\|\leq \epsilon.
$
Since $\epsilon > 0 $ is arbitrary, we have
\begin{align}\label{tun}
 \limsup_{n\rightarrow\infty}\sup_{y\in D}\|T_{t}(T_{\mu_{n}}y)-T_{\mu_{n}}y\|=0.
\end{align}
Let $t \in S $ and $\epsilon > 0$, then there exists $\delta > 0$, which satisfies \eqref{nonem}. Take $L_{0}=\frac{2}{1-\alpha}\| f(p)-p\|$. Now from the condition $\displaystyle \lim_{n} \epsilon_{n}=0$ and from \eqref{tun}   there exists a natural number $N_{1}$ such that $T_{\mu_{n}}y \in F_{\delta}(T_{t} ; D)$ for all $ y \in D$ and $\epsilon_{n}<\frac{\delta}{2L_{0}}$ for all $n \geq N_{1}$. Since  $ p \in \rm{Fix(\sc) }$  and  $\{z_{n}\}\subset D$, we have
\begin{align*}
\epsilon_{n}\|  f(z_{n})-& T_{\mu_{n}}z_{n}\|\\\leq & \epsilon_{n}\Big(\|  f(z_{n})-  f(p)\|+\|  f(p)- p\|\\& +\| T_{\mu_{n}}p- T_{\mu_{n}}z_{n}\|\Big)\\\leq &
 \epsilon_{n}\Big(  \alpha\|z_{n}-p\|+\|  f(p)- p\|+\|z_{n}-p\|\Big)
 \\\leq &  \epsilon_{n}\left(\frac{1+  \alpha}{1-\alpha}\| f(p)-p\|+\|  f(p)- p\|\right)
\\=&\epsilon_{n}L_{0}\leq\frac{\delta}{2},
\end{align*}
for all $ n \geq N_{1}$. Observe that
\begin{align*}
z_{n}&=\epsilon_{n}  f(z_{n})+(1-\epsilon_{n})T_{\mu_{n}}z_{n}\\
&=T_{\mu_{n}}z_{n}+\epsilon_{n}\left(  f(z_{n})- T_{\mu_{n}}z_{n}\right)\\
&\in F_{\delta}(T_{t} ; D)+B_ {\frac{\delta}{2}} \\
&\subseteq F_{\delta}(T_{t} ; D)+B_{\delta}\\
&\subseteq F_{\epsilon}(T_{t} ; D).
\end{align*}
for all $ n \geq N_{1}$. This show that\\
\indent \quad$\|z_{n}-T_{t}z_{n}\|\leq\epsilon\quad \rm{ (n \geq N_{1})}$.\\
Since $\epsilon > 0 $ is arbitrary, we get
$\lim_{n\rightarrow \infty}\|z_{n}-T_{t}z_{n}\|=0 $.

 Step 4.    The weak  limit set of $\{z_{n}\}$ which is denoted by $\omega_{\omega}\{z_{n}\}$ is a subset of $\rm{Fix}(\mathcal{S})$.

Proof.  Let $x^{*} \in \omega_{\omega}\{z_{n}\} $ and let $\{z_{n_{j}}\}$ be a subsequence of $ \{z_{n}\} $ such that $z_{n_{j}} \rightharpoonup x^{*}$. We need to show that $ x^{*} \in \rm{Fix}(\mathcal{S}) $. Since from  Corollary 5.2.10 in \cite{Ag},  $I -T_{t}$ is demiclosed at zero, for each $t \in S$ and hence  from  step 3, we conclude that $x^{*} \in \rm{Fix}(\mathcal{S})$.

  Step 5.   There exists a unique sunny nonexpansive retraction $ P $  of $ C $ onto $  \rm{Fix}(\mathcal{S})$ and $ x \in C $ such that
\begin{align}\label{gbs}
  \Gamma  :=\limsup_{n}\langle x-Px\,,\,J(z_{n}-Px)\rangle\leq0.
\end{align}
Proof. Let $\mu$ be a cluster point  of $\{\mu_{n}\}$. It is obvious that $\mu$ is an invariant mean. Hence, by Lemma \ref{sunyn}   there exists a unique sunny nonexpansive retraction $ P $ of $ C $ onto
$ \rm{Fix(\sc)} $.  Banach Contraction Mapping Principle guarantees that   $fP$ has a unique fixed point $x\in C$.  We show that
\begin{equation*}
  \Gamma  :=\limsup_{n}\langle x-Px\,,\,J(z_{n}-Px)\rangle\leq0.
\end{equation*}

Note that, from the definition of
 $ \Gamma $  and      from Step 2 that $ \{z_{n}\} $ is bounded, and  since $E$ is reflexive, therefore from theorem 1.9.21 in \cite{Ag}, $ \{z_{n}\} $  is weakly compact, hence   by  Proposition 1.7.2 in \cite{Ag},   we can select a subsequence $ \{z_{n_{j}}\}$
 of $ \{z_{n}\}$ with the following properties:\\
(i)  $\displaystyle\lim_{j}\langle x-Px\,,\,J(z_{n_{j}}-Px)\rangle=\Gamma$;\\
(ii) $\{z_{n_{j}}\}$ weakly converges  to a point $z$;\\
by Step 4, we have $ z \in  \rm{Fix}(\mathcal{S}) $.  Since $E  $  is smooth,   and the duality mapping is weakly sequentially continuous, we have
\begin{align*}
\Gamma =& \lim_{j}\langle  x-Px\,,\,J(z_{n_{j}}-Px)\rangle=\langle  x-Px\,,\,J(z-Px)\rangle \leq 0.
\end{align*}

Step 6. $ \{z_{n}\}$  strongly converges to $Px$.\\
Proof.
 As in the proof of Theorem 3.1 in \cite{soo}, we have, for each $n\in \mathbb{N}$,
\begin{align}\label{gbh}
 \Vert z_{n}-Px \Vert^{2}\leq \frac{2}{1-   \alpha }\langle x-Px\, , \,J(z_{n}-Px)\rangle.
\end{align}
 Therefore, from \eqref{gbs}, \eqref{gbh} and that $ Px \in  \rm{Fix}(\mathcal{S})$, we conclude
\begin{align*}
    \limsup_{n}\|z_{n}-Px\|^{2}\leq &  \frac{2}{1-   \alpha}\limsup_{n}\langle x-Px \,,\,J(z_{n}-Px)\rangle\leq0.
\end{align*}
That is $z_{n} \rightarrow Px$.
\end{proof}

\begin{center}
 %  \bf{ Acknowledgements}
\end{center}
%The  author is  grateful to  the University of Lorestan for their support.

\end{large}
% ------------------------------------------------------------------------

\begin{thebibliography}{1}
\begin{large}
\bibitem{Ag} R. P. Agarwal,   D. Oregan and D. R. Sahu, Fixed point theory for Lipschitzian-type mappings with applications, in: Topological Fixed Point Theory and its Applications, vol. 6, Springer, New York, 2009.


\bibitem{rb}R. E. Bruck, On the convex approximation property and the asymptotic behavior of nonlinear contractions in Banach spaces, Israel J. Math. 38 (1981), 304-314.
     \bibitem{hi} N. Hirano, K. Kido, and W. Takahashi, Nonexpansive retractions and nonlinear ergodic
theorems in Banach spaces, Nonlinear Anal. 12 (1988), 1269-1281.
\bibitem{soo} N. Hussain, M. Lashkarizadeh. Bami and E. Soori, An implicit method for finding a common
fixed point of a representation of
nonexpansive mappings in Banach spaces, Fixed Point Theory and Applications 2014, 2014:238.
\bibitem{ki} K. Kido and W. Takahashi, Mean ergodic theorems for semigroups of linear continuous
in Banach spaces, J. Math. Anal. Appl. 103 (1984), 387-394.

\bibitem{lmt} A.T. Lau, H. Miyake, W. Takahashi, Approximation of fixed points for amenable semigroups of nonexpansive mappings in Banach spaces, Nonlinear Anal. 67 (2007) 1211-1225.




 \bibitem{said}    S. Saeidi, Existence of ergodic retractions for semigroups in Banach spaces, Nonlinear Anal. 69 (2008) 3417-3422.
  \bibitem{ss}    S. Saeidi, A note on "Approximation of fixed points for amenable semigroups of
nonexpansive mappings in Banach spaces" [Nonlinear Anal. 67 (2007) 1211-1225], Nonlinear Analysis 72 (2010) 546-548.







\bibitem{tn} W. Takahashi, Nonlinear Functional Analysis: Fixed Point Theory and its Applications, Yokohama Publishers, Yokohama, 2000.


    \end{large}
\end{thebibliography}
\end{document}